\title{An elementary proof of the naturality of the Yoneda embedding}
\author{ Maxime Ramzi}
\date{}
\newtheorem{thm}{Theorem}[section]
\newtheorem{lm}[thm]{Lemma}
\newtheorem{prop}[thm]{Proposition}
\newtheorem{cor}[thm]{Corollary}
\newtheorem*{thm*}{Theorem}
\theoremstyle{definition}
\newtheorem{defn}[thm]{Definition}
\newtheorem{ex}[thm]{Example}
\newtheorem{rmk}[thm]{Remark}
\newtheorem{warn}[thm]{Warning}
\newcommand{\op}{^{\mathrm{op}}}
\newcommand{\cat}{\mathbf}
\newcommand{\Aut}{\mathrm{Aut}}
\newcommand{\Cat}{{\cat{Cat}}}
\newcommand{\on}{\operatorname}
\newcommand{\id}{\mathrm{id}}
\newcommand{\Fun}{\on{Fun}}
\newcommand{\map}{\on{map}}
\newcommand{\Ss}{\cat S}
\newcommand{\PrL}{\cat{Pr}^L}
\newcommand{\Psh}{\cat{Psh}}
\newcommand{\yo}{\text{\usefont{U}{min}{m}{n}\symbol{'210}}}
\DeclareFontFamily{U}{min}{}
\DeclareFontShape{U}{min}{m}{n}{<-> udmj30}{}
\begin{document}

\maketitle
\begin{abstract}
    We give a short proof that the Yoneda embedding is natural for $\infty$-categories, and further prove that the space of natural transformations that are, pointwise, the Yoneda embedding, is contractible. 
\end{abstract}
\section*{Introduction}
In ordinary category theory, the Yoneda embedding, from a category $C$ to its category of presheaves (of sets) is a fundamental object. It is therefore no surprise that in the homotopy coherent context of $\infty$-categories, the corresponding embedding from an $\infty$-category $C$ to its $\infty$-category of presheaves of spaces is just as fundamental. Recall that $\infty$--categories are analogues of categories where hom-sets are replaced with mapping spaces or mapping homotopy types, and where equality of morphisms is replaced by homotopy. This makes them a very powerful tool for handling homotopy-theoretic situations, but it also makes working with them more complicated than with ordinary categories. 

A key difference with ordinary categories, for instance, is that to specify a functor $F$, one needs to specify more than an objects-and-arrows assignment, as one needs to provide \emph{homotopies} $F(f\circ g)\simeq F(f)\circ F(g)$, as well as higher analogues accounting for the fact that composition of arrows in an $\infty$-category is only associative up to (coherent) homotopy. The Yoneda embedding is an example of a functor, and the main interest of this article - while classically easy to define as $c\mapsto \hom_C(-,c)$, the ``formula'' $c\mapsto \map_C(-,c)$ is only an informal description, and even \emph{defining} it properly takes some work (see \cite[Section 5.1.3]{HTT} for a definition, and a proof of a basic version of the Yoneda lemma). 

Accordingly, to specify a natural transformation between functors, one needs to specify more than a collection of transformations with the property that certain squares commute: one needs to specify the homotopies making those squares commute, as well as higher homotopies witnessing the compatibility of these naturality squares with the aforementioned structure of the functors. 

In particular, to understand how the Yoneda embedding varies as the input $\infty$-category $C$ varies (how natural it is), one needs to understand exactly what we mean by the functor ``$C\mapsto \Psh(C)$''.

On the one hand, just as in classical category theory, the $\infty$-category of presheaves (of spaces) over $C$ has a universal property: the Yoneda embedding witnesses it as the free cocompletion of $C$, i.e. the $\infty$-category freely generated under colimits by $C$.  This universal property allows one to define a functor $C\mapsto \mathcal P(C)$ whose value is pointwise given by $\Psh(C)$, and for which the Yoneda embedding is natural \emph{by design}, see \cite[Section 5.3.6]{HTT} for more detail.

On the other hand, one can define the presheaf functor simply as $C\mapsto \Fun(C\op,\Ss)$. From this perspective, this functor is contravariant, but it sends every arrow to a right adjoint, and therefore one can ``take left adjoints'' to obtain a covariant functor (technically, this is done by means of the un/straightening equivalence). On objects, this also has the same value, $\Psh(C)$, and on arrows, it turns out that one can prove that it also has the same value as the one described above. In particular, as functors defined on the \emph{homotopy category}, they are equivalent. Working $1$-categorically, this would be essentially enough, and in fact one can work in this sort of way to prove the statement for $\Fun(C\op,\mathbf{Set})$ and some weak form of naturality (in this $1$-categorical case, the covariant presheaf functor is only a ``pseudo-functor'', a technicality which is taken care of by the very setting for $\infty$-categories). 

However, as we explained above, in $\infty$-categories, knowing how things work for objects and arrows is not enough. It was proved in \cite[Theorem 8.1]{HHLN} that the Yoneda embedding \emph{is} in fact natural for the second functoriality too, and in particular that the two functors $C\mapsto \mathcal P(C)$ and $C\mapsto \Psh(C)$ are equivalent. The proof therein uses some non-trivial technology, and as the authors put it, ``[it] is a lucky accident that
[their] methods answer it''. In fact, right before their actual proof, in their Remark 8.3., they come just short of actually providing a complete and more elementary proof of this naturality. 

The goal of this short note is to fill the gap of this short proof-attempt, and show that there is, in fact, a short and relatively elementary proof. The current presentation of the proof looks somewhat different, but it originated from an attempt essentially equivalent to their remark. 

The key technical input is a result of Barwick and Kan, which says that every $\infty$-category is the ($\infty$-categorical) localization of some poset. With this as input, the proof is remarkably short. 
\section*{Acknowledgements}
I am grateful to Dustin Clausen for originally pointing out this subtlety, and for many helpful conversations about this note. I also want to thank Fabian Hebestreit for his help and for suggesting I add the section on uniqueness, and finally Bastiaan Cnossen for helpful comments on an earlier version of this note and for his simplification of a proof in the appendix. 

This research was supported by the Danish National Research Foundation through the Copenhagen Centre for Geometry and Topology (DNRF151)
\section*{Conventions}
We work with $\infty$-categories as extensively developped by Lurie in \cite{HTT}.

We use $\Ss$ to denote the $\infty$-category of spaces, $\Cat_\infty$ for the $\infty$-category of small $\infty$-categories, $\widehat{\Cat_\infty}$ for the $\infty$-category of possibly large $\infty$-categories, and $\PrL$ the $\infty$-category of presentable $\infty$-categories and left adjoints. 

As usual, $\Delta$ denotes the usual simplex category, which we see as a full subcategory of $\Cat_\infty$. Finally, for an $\infty$-category $C$, $ho(C)$ denotes its homotopy category. 
\section{Preliminaries on the Yoneda embedding}
We first recall the definition of the Yoneda embedding: 
\begin{defn}
Let $C$ be an $\infty$-category. The mapping space functor $\map_C: C\op\times C\to\Ss$ curries over to a functor $\yo: C\to \Fun(C\op,\Ss) =: \Psh(C)$, which we call the Yoneda embedding. 
\end{defn}

The construction $C\mapsto \Fun(C\op,\Ss)$ is a contravariant functor via precomposition which, by unstraightening corresponds to a cartesian fibration $\underline{\Psh}\to \Cat_\infty$. Because the restriction functors have left adjoints, this cartesian fibration is also coCartesian, and we can therefore straighten it covariantly to get a covariant functor $C\mapsto \Psh(C)$.
\begin{defn}\label{defn : pshfunc}
The above construction is our definition of the covariant presheaf functor $\Psh:\Cat_\infty\to\widehat{\Cat_\infty}$. 
\end{defn}

It is not hard to prove that the Yoneda embedding is weakly natural for this functoriality: 
\begin{prop}{\cite[Proposition 5.2.6.3]{HTT}}
For every morphism $f : C\to C'$, there is a commutative diagram: 
\[\begin{tikzcd}
	C & {C'} \\
	{\Psh(C)} & {\Psh(C')}
	\arrow["\yo"', from=1-1, to=2-1]
	\arrow["\yo"', from=1-2, to=2-2]
	\arrow["{f_!}"', from=2-1, to=2-2]
	\arrow["f", from=1-1, to=1-2]
\end{tikzcd}\]
where $f_!$ is left adjoint to $f^*$, restriction along $f\op$. 
\end{prop}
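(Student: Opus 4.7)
The plan is to reduce the commutativity of the square to a computation with the Yoneda lemma and the $(f_!, f^*)$ adjunction, using the fact that the Yoneda embedding of $\Psh(C')$ is itself fully faithful so it suffices to check after postcomposing with it.

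Concretely, I would first note that by full faithfulness of $\yo_{\Psh(C')} : \Psh(C') \hookrightarrow \Fun(\Psh(C')\op, \Ss)$, it is enough to exhibit a natural equivalence between the two uncurried functors $C \times \Psh(C')\op \to \Ss$ associated to $f_!\circ \yo_C$ and $\yo_{C'}\circ f$. The one attached to $\yo_{C'}\circ f$ sends $(c,\Phi)$ to $\Map_{\Psh(C')}(\yo_{C'}(f(c)),\Phi)$, which by the Yoneda lemma applied in $\Psh(C')$ is naturally equivalent to $\Phi(f(c))$. The one attached to $f_!\circ \yo_C$ sends $(c,\Phi)$ to $\Map_{\Psh(C')}(f_!\yo_C(c),\Phi)$, which by the adjunction $(f_!,f^*)$ is naturally equivalent to $\Map_{\Psh(C)}(\yo_C(c),f^*\Phi)$, and then by the Yoneda lemma in $\Psh(C)$ to $(f^*\Phi)(c) = \Phi(f(c))$. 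Chaining these identifications and composing back with $\yo_{\Psh(C')}$ produces the required natural equivalence $f_!\circ \yo_C \simeq \yo_{C'}\circ f$.

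Equivalently and perhaps more hands-on, one can construct the natural transformation $f_!\circ \yo_C \Rightarrow \yo_{C'}\circ f$ directly as the adjoint transpose of the map $\yo_C \Rightarrow f^*\circ \yo_{C'}\circ f$ induced by functoriality of $f$ on mapping spaces (the latter is obtained by restricting the mapping space functor $\Map_{C'}$ along $f\op \times f$ and comparing with $\Map_C$). The computation above then shows that this transformation is a pointwise equivalence, hence an equivalence.

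The principal obstacle is not the content of the argument, which at the level of objects is essentially a one-line application of adjunction and Yoneda, but rather the \emph{coherence} of these manipulations: in the $\infty$-categorical setting one must regard the adjunction, the Yoneda lemma, the restriction $f^*$, and the currying operation as honest functors between appropriate (large) functor $\infty$-categories, and verify that the naturalities in $c$ and in $\Phi$ fit together simultaneously. Achieving this at the level of quasi-categories is exactly what makes Lurie's actual proof in \cite{HTT} comparatively technical, even though the conceptual reason for the commutativity is very transparent.
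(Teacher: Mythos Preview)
The paper does not give its own proof of this statement: it is simply recorded as a citation of \cite[Proposition 5.2.6.3]{HTT}, with no argument supplied, and is then used as a black box in the proof of Theorem~\ref{mainthm}. So there is no ``paper's proof'' to compare your proposal against.

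That said, your sketch is the standard one and is essentially Lurie's own approach: identify $f_!\yo_C(c)$ and $\yo_{C'}(f(c))$ by showing they corepresent the same functor on $\Psh(C')$, via the $(f_!,f^*)$-adjunction and the Yoneda lemma, and then argue that this identification is natural in $c$. Your second paragraph --- building the transformation as the transpose of the map $\yo_C \Rightarrow f^*\yo_{C'}f$ induced by the action of $f$ on mapping spaces, and then checking it is a pointwise equivalence --- is in fact the cleaner packaging, and you correctly flag that the genuine work in \cite{HTT} lies in making all of this coherent rather than merely pointwise.

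One small slip worth fixing: the ``uncurried functors'' you write down have the wrong variance. The assignment $(c,\Phi)\mapsto \Map_{\Psh(C')}(\yo_{C'}(f(c)),\Phi)$ is contravariant in $c$ and covariant in $\Phi$, hence a functor $C\op\times\Psh(C')\to\Ss$, not $C\times\Psh(C')\op\to\Ss$. Correspondingly, what you are really invoking is not full faithfulness of $\yo_{\Psh(C')}$ but rather that mapping \emph{out} of an object detects it (the contravariant Yoneda embedding of $\Psh(C')$). This does not affect the substance of the argument.
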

The universal property of the presheaf category also makes it clear that there is a functor $\mathcal P: \Cat_\infty\to \PrL$ taking $C$ to $\Psh(C)$ and $f$ to $f_!$ for which the Yoneda embedding is natural - it is not a priori clear that they agree, and it will be a corollary of what we prove here. 
\section{The proof}
\subsection{A result of Barwick-Kan, and a rigidity result}
We use this subsection to record the following result, originally due to Barwick and Kan (see \cite[Theorem 6.1]{BK1}, specifically point (iv), as well as \cite{BK2}), as well as a consequence thereof - we cite Kerodon for it to be closer to our language: 
\begin{thm}{\cite[Theorem 6.3.7.1 (02MC)]{kerodon}}\label{thm : poset}
For any $\infty$-category $C$, there exists a relative poset $(P,W)$ and a morphism $P\to C$ witnessing $C$ as the localization $P[W^{-1}]$.
\end{thm}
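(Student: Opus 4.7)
The plan is to construct $P$ from $C$ by a subdivision procedure on the underlying simplicial set. Recall the classical fact that for any simplicial set $X$, an iterated barycentric subdivision $\mathrm{Sd}^k(X)$ (with $k$ large enough, typically $k=2$) is canonically isomorphic to the nerve of a poset $P_X$, and the iterated last-vertex maps assemble into a natural morphism of simplicial sets $\lambda_X \colon N(P_X) \to X$. Applied to (the quasi-category underlying) $C$, this produces a poset $P$ together with a functor $\lambda \colon P \to C$.

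The natural candidate for $W$ is the collection of morphisms of $P$ that $\lambda$ sends to equivalences in $C$. The remaining task is to verify that $\lambda$ exhibits $C$ as the $\infty$-categorical localization $P[W^{-1}]$. My plan is to proceed by reduction to standard simplices: write $C$ as the colimit in $\Cat_\infty$ of its simplices, $C \simeq \colim_{\Delta^n \to C} \Delta^n$, and reduce to the analogous claim for each $\Delta^n$. For $X = \Delta^n$, the map $\lambda_{\Delta^n}$ is a map between posets whose fibers should be contractible (checkable by direct combinatorial inspection), and one expects it to be a localization at its fiberwise morphisms. This case can be handled by induction on $n$, using the join decomposition $\Delta^n \simeq \Delta^{n-1} \star \Delta^0$ and the good behavior of subdivision under joins.

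The main obstacle will be the gluing step: deducing the general statement from the case of a single simplex requires that localization commute with the colimit $\colim_{\Delta^n \to C} \Delta^n$. This is not automatic for arbitrary classes of morphisms, so one must argue carefully, for instance by passing to marked simplicial sets with the Joyal model structure, in which localizations can be realized as fibrant replacements and therefore interact transparently with colimits. Alternatively, one could follow the original model-categorical approach of Barwick--Kan, constructing a model structure on relative posets Quillen-equivalent to a standard model for $\infty$-categories and then taking $(P,W)$ to be a (co)fibrant replacement of $C$. Either route requires substantial technical setup, which is presumably why the authors cite an external reference rather than reproving the theorem here.
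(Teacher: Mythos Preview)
There is nothing to compare: the paper does not prove this theorem. It is recorded as a citation to Barwick--Kan and to Kerodon and used purely as a black-box input to the rigidity result that follows. You correctly identify this in your final paragraph.

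On the sketch itself: the strategy is in the right spirit, and the cited references do proceed via subdivision-type constructions, but two points deserve care. First, the assertion that $\mathrm{Sd}^2(X)$ is isomorphic to the nerve of a poset for an \emph{arbitrary} simplicial set $X$ is not the classical statement (which is about simplicial complexes); for general $X$ one typically passes instead to the poset of nondegenerate simplices of a subdivision, or to a related combinatorial gadget, rather than using $\mathrm{Sd}^2$ on the nose. Second, and more substantively, the classical fact about the last-vertex map is that it is a \emph{weak homotopy equivalence} in the Kan--Quillen sense, whereas here you need it to exhibit a Dwyer--Kan \emph{localization} in the Joyal/$\infty$-categorical sense; bridging that gap is precisely the content of the cited references, and your proposed reduction to $\Delta^n$ together with a gluing argument in marked simplicial sets would amount to reproving them. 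So your outline is a fair account of why the result is plausible, but it is not an independent proof, and the paper makes no claim to give one.
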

The consequence we wish to highlight, and which is interesting in its own right, is the following: 
\begin{thm}\label{thm : rigid}
Let $f: \Cat_\infty\to\Cat_\infty$ be a functor, and $\alpha$ a natural isomorphism from the identity of $ho(\Cat_\infty)$ to $ho(f): ho(\Cat_\infty)\to ho(\Cat_\infty)$.  

Then there exists a natural transformation $\overline\alpha :\id\to f$ which is pointwise equivalent to $\alpha$, and in particular is a natural equivalence. 
\end{thm}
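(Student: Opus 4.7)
The space I want to produce a point in can be identified with the end
\[ \Map_{\Fun(\Cat_\infty,\Cat_\infty)}(\id,f) \simeq \int_{C\in\Cat_\infty}\Map_{\Cat_\infty}(C,f(C)), \]
and $\alpha$ furnishes a compatible system of homotopy classes in $\pi_0$ of each factor. The plan is to lift this $\pi_0$-level data to an honest point of the end; the resulting natural transformation is then automatically a pointwise equivalence since each $\alpha_C$ is.

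The first step is to reduce to posets via Theorem \ref{thm : poset}. Write $\cat{Pos}\subset\Cat_\infty$ for the full subcategory of ordinary posets, with inclusion $\iota$. Since every $\infty$-category arises as a localization $P[W^{-1}]$ of some poset $P$, and since a pointwise equivalence $\alpha$ automatically inverts any such $W$, I would argue that restriction along $\iota$ induces an equivalence
\[ \Map_{\Fun(\Cat_\infty,\Cat_\infty)}(\id,f) \longrightarrow \Map_{\Fun(\cat{Pos},\Cat_\infty)}(\iota,f\circ\iota) \]
on the components corresponding to pointwise equivalences (and similarly for the analogous statement in $ho$). This reduces the problem to constructing a natural transformation of functors $\cat{Pos}\to\Cat_\infty$ that is pointwise equivalent to $\alpha|_{\cat{Pos}}$.

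The second step, and the heart of the argument, is to build this natural transformation on $\cat{Pos}$. The crucial feature is that $\cat{Pos}$ is a $1$-category, which dramatically tames the coherence data: a natural transformation out of a $1$-category admits a relatively explicit description, with higher coherences governed by the higher homotopy of the target mapping spaces. Concretely, I would build $\bar\alpha|_{\cat{Pos}}$ inductively, starting with representatives of $[\alpha_P]\in\pi_0\Map(P,f(P))$ provided by $\alpha$, then filling in naturality $2$-cells using the $ho$-naturality, then filling in higher coherences. The main obstacle will be verifying that this inductive process is unobstructed; I expect this to reduce to analyzing the cosimplicial object computing the end over $\cat{Pos}$, exploiting both that the indexing is $1$-categorical and that each $\bar\alpha_P$ is an equivalence (so that the relevant obstruction groups vanish). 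A plausible shortcut is to reduce further to $\Delta\subset\cat{Pos}$, where the combinatorics becomes fully explicit and one can hope to write down the natural transformation by hand.
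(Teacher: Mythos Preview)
Your proposal has the right shape—reduce to a rigid subcategory and lift there—but both steps have genuine gaps, and the order in which you deploy the tools is inverted relative to what actually works.

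For Step 1, you justify the reduction to $\cat{Pos}$ via Barwick--Kan, but localization alone does not give you a map back from $\Map(\iota, f\iota)$ to $\Map(\id, f)$: knowing that every $C$ is \emph{some} $P[W^{-1}]$ does not make this assignment functorial in $C$, so there is no evident way to extend a natural transformation defined on posets to all of $\Cat_\infty$. The paper's mechanism is different: it observes that $\id_{\Cat_\infty}$ is \emph{left Kan extended} from $\Delta$ (via the complete Segal space presentation), so that $\Map(\id, f) \simeq \Map(\iota_\Delta, f\circ\iota_\Delta)$ outright, for all components. This is what your ``plausible shortcut'' to $\Delta$ should have been, and it is the actual reduction step.

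For Step 2, your obstruction-theoretic sketch is not a proof, and the observation that makes it unnecessary is missing. You note that the \emph{source} $\cat{Pos}$ (or $\Delta$) is a $1$-category, but the target is still $\Cat_\infty$, so coherence is not yet tamed. The missing point is that since $\alpha$ is a natural isomorphism in $ho(\Cat_\infty)$, the restriction $f|_\Delta$ lands in $\Delta$. Now both functors in play are $\Delta\to\Delta$, and since $\Delta$ has discrete mapping spaces one has $\Delta\simeq ho(\Delta)$; hence the $ho$-level isomorphism $\alpha|_\Delta$ \emph{is already} an honest natural equivalence, with no inductive filling required. (The same argument would run with $\cat{Pos}$ in place of $\Delta$, since $\cat{Pos}\subset\Cat_\infty$ also has discrete mapping spaces—but one still needs the Kan-extension input, not Barwick--Kan, to reduce there.)

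Finally, the paper does invoke Barwick--Kan, but only \emph{after} constructing $\overline\alpha$, to verify that it agrees pointwise with $\alpha$ on all of $\Cat_\infty$: one first checks agreement on posets (where functors are determined by their effect on objects), and then propagates along localizations $P\to C$.
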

\begin{rmk}
Note that Toën \cite{toen}, and later Barwick and Schommer-Pries \cite{BSP} have already proved that $\Cat_\infty$ is relatively ``rigid'' : its automorphism space is discrete, and consists of $\{\id, (-)\op\}$. Theorem \ref{thm : rigid} is another instance of the ``rigidity'' of $\Cat_\infty$. 
\end{rmk}
\begin{proof}
First, we observe that the identity of $\Cat_\infty$ is left Kan extended from the full subcategory $\Delta\subset \Cat$ : indeed, by the ``complete Segal space'' presentation of  $\Cat_\infty$, we see that it is a localization of $\Psh(\Delta)$, for which the analogous claim clearly holds - and this claim is stable under accessible localizations.

It follows that to define a morphism $\id_{\Cat_\infty}\to f$, it suffices to define one between the restrictions of these functors to $\Delta$. 

We know that $f$ restricted to $\Delta$ lands in $\Delta$, and we also know that $\Delta\to ho(\Delta)$ is an equivalence. Therefore our isomorphism $\alpha$ actually provides us with a natural equivalence $\id_{\Delta}\simeq f_{\mid\Delta}$, which we use to define a natural transformation $\overline\alpha : \id_{\Cat_\infty}\to f$ - we are reduced to proving that this is pointwise equivalent to $\alpha$. 

We first observe that it is so on posets : let $P$ be a poset, and $p\in P$, viewed as a morphism $\Delta^0\to P$. We can draw the following diagram : 
\[\begin{tikzcd}
	{\Delta^0} & {f(\Delta^0)} & {\Delta^0} \\
	P & {f(P)} & P
	\arrow[from=1-1, to=2-1]
	\arrow[from=1-1, to=1-2]
	\arrow[from=2-1, to=2-2]
	\arrow[from=1-2, to=2-2]
	\arrow["\cong"', from=2-3, to=2-2]
	\arrow["\cong"', from=1-3, to=1-2]
	\arrow[from=1-3, to=2-3]
\end{tikzcd}\]
where the left hand square is the naturality square from the natural transformation $\overline\alpha$, whereas the right hand square is some homotopy witnessing the fact that the natural isomorphism $\alpha$ is natural in the homotopy category. 

Note that the top map is obviously the identity, so that going right-down gives us $p$, while going down-right gives us the image of $p$ under the natural transformation $\overline\alpha_P$. Because morphisms of posets are determined by what they do on objects, this shows that $P\to f(P)\to P$ is the identity, hence by the 2-out-of-3 property, this shows that $P\to f(P)$ is an equivalence, in fact it is (canonically, uniquely) homotopic to $\alpha_P$. 

Now we use the result of Barwick-Kan, Theorem \ref{thm : poset}, i.e. the fact that any $\infty$-category is a localization of a poset to conclude that $\overline\alpha$ is pointwise equivalent to $\alpha$.

Indeed, for an $\infty$-category $C$ and such a localization $P\to C$, to prove that $C\xrightarrow{\overline\alpha_C} f(C)\xrightarrow{\alpha_C^{-1}} C$ is equivalent to the identity, it suffices to prove so after precomposing with $P$. But now, using naturality of $\overline\alpha$, and some homotopy witnessing the naturality of $\alpha$ along $P\to C$, we find that this composite, precomposed with $P\to C$, is exactly the same as the identity precomposed with $P\to C$: 
\[\begin{tikzcd}
	P & {f(P)} & P \\
	C & {f(C)} & C
	\arrow["{\overline\alpha_C}"', from=2-1, to=2-2]
	\arrow["{\alpha_C^{-1}}"', from=2-2, to=2-3]
	\arrow["{\overline\alpha_P}", from=1-1, to=1-2]
	\arrow["{\alpha_P^{-1}}", from=1-2, to=1-3]
	\arrow[from=1-1, to=2-1]
	\arrow[from=1-2, to=2-2]
	\arrow[from=1-3, to=2-3]
\end{tikzcd}\]

The claim now follows. 
\end{proof}
\subsection{Putting things together}
We are now ready to prove our main theorem:
\begin{thm}\label{mainthm}
There is a natural transformation from the inclusion of small $\infty$-categories to large $\infty$-categories to the presheaf functor $\Psh$ (as described in definition \ref{defn : pshfunc}) which, pointwise, is the Yoneda embedding. 

More succintly: the Yoneda embedding is natural. 
\end{thm}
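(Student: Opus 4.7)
My plan is to identify the functor $\Psh$ of Definition \ref{defn : pshfunc} with the functor $\mathcal P: \Cat_\infty \to \widehat{\Cat_\infty}$ arising from the universal property of presheaves as the free cocompletion. The latter carries a tautological natural transformation $\eta: i \to \mathcal P$ whose components are the Yoneda embeddings; once $\mathcal P$ and $\Psh$ are identified, $\eta$ transports to the natural transformation we seek.

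To produce the identification, I would first observe that $\mathcal P$ and $\Psh$ agree at the level of homotopy categories: both send $C$ to $\Psh(C)$, and both send $f: C\to C'$ to the left adjoint $f_!$ of restriction along $f\op$ (the latter being precisely the weak naturality of the Yoneda embedding recalled in Section 1). This gives a natural isomorphism $\alpha : ho(\mathcal P)\to ho(\Psh)$ of functors $ho(\Cat_\infty)\to ho(\widehat{\Cat_\infty})$. I would then lift $\alpha$ to an equivalence of $\infty$-functors by mimicking the strategy of Theorem \ref{thm : rigid}: since $i$ is left Kan extended from $\Delta$, it suffices to exhibit the equivalence after restriction to $\Delta$; on posets (which contain $\Delta$) one leverages the fact that maps of posets are determined by their action on objects, together with the already available homotopy-commutative squares, to assemble the required coherences; finally one extends to all $\infty$-categories using the Barwick--Kan localization of Theorem \ref{thm : poset}.

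The main obstacle, and the reason Theorem \ref{thm : rigid} cannot be applied as a black box, is that here the target is $\widehat{\Cat_\infty}$ rather than $\Cat_\infty$: the proof of the rigidity theorem crucially exploits that an endofunctor of $\Cat_\infty$ naturally isomorphic to the identity lands in $\Delta$ after restriction to $\Delta$, allowing one to invoke the $1$-categorical equivalence $\Delta\simeq ho(\Delta)$. Since $\Psh([n])$ is very much not in $\Delta$, that step must be replaced by an argument run entirely through poset-level rigidity and Theorem \ref{thm : poset}. Once that adaptation is carried out, the construction of the natural transformation---and, by essentially the same reasoning, its uniqueness up to a contractible choice announced in the abstract---should follow.
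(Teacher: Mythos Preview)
Your proposal correctly locates the difficulty but does not actually overcome it, and the detour through $\mathcal P$ is unnecessary. The paper does \emph{not} compare $\Psh$ with $\mathcal P$; it works with $\Psh$ alone and deduces $\mathcal P\simeq\Psh$ only afterwards, as a corollary. The missing idea in your sketch is the following: let $C^{\yo}\subset\Psh(C)$ be the essential image of the Yoneda embedding. By weak naturality, $C\mapsto C^{\yo}$ is a subfunctor of $\Psh$, and crucially it lands in \emph{small} $\infty$-categories, i.e.\ it is a functor $\Cat_\infty\to\Cat_\infty$. Now Theorem~\ref{thm : rigid} applies verbatim to the endofunctor $C\mapsto C^{\yo}$ with $\alpha$ given by the Yoneda equivalence $C\simeq C^{\yo}$, producing a natural equivalence $\id_{\Cat_\infty}\simeq(-)^{\yo}$. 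Postcomposing with the tautological inclusion $C^{\yo}\hookrightarrow\Psh(C)$ gives the desired natural transformation. No adaptation of the rigidity argument to a $\widehat{\Cat_\infty}$-valued target is needed.

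Your proposed ``adaptation'' does not work as written. You say that on posets one can ``assemble the required coherences'' from the homotopy-commutative squares; but in the proof of Theorem~\ref{thm : rigid} the poset step is a \emph{verification} that an already-constructed transformation $\overline\alpha$ agrees with $\alpha$, not a construction. The construction itself happens on $\Delta$, and there it uses that both source and target land in $\Delta$, so that the $1$-categorical equivalence $\Delta\simeq ho(\Delta)$ promotes $\alpha|_\Delta$ to an honest natural transformation. Since $\Psh([n])$ is not in $\Delta$ (nor in any $1$-category), this step genuinely fails for your functors, and knowing that $i$ is left Kan extended from $\Delta$ does not help you build a map $i|_\Delta\to\Psh|_\Delta$ out of merely homotopy-commutative data. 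The passage to the small essential image is precisely what rescues this.
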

\begin{proof}
Consider the functor $\Psh : \Cat_\infty\to \widehat{\Cat_\infty}$. Given an $\infty$-category $C$, let $C^\yo\subset\Psh(C)$ denote the essential image of the Yoneda embedding. 

By weak naturality of the Yoneda embedding, for any functor $f: C\to D$, the induced functor $\Psh(C)\to \Psh(D)$ sends $C^\yo$ to $D^\yo$. We can therefore define a unique subfunctor $C\mapsto C^\yo$ of $C\mapsto \Psh(C)$ (this is folklore - for a precise proof, see Corollary \ref{cor : subfunctor}\footnote{In the case of full subcategories, the generality of the appendix is not needed: one can give a construction of the subfunctor via the unstraightening equivalence. We included the appendix nonetheless to record the general statement rather than a special case.}). 

There is (by definition) a natural transformation $C^\yo\to \Psh(C)$, so it suffices to show that the Yoneda embedding $C\to C^\yo$ can be made natural. Because $f: C\mapsto C^\yo$ is a functor $\Cat_\infty\to \Cat_\infty$, we can apply Theorem \ref{thm : rigid}. 

Indeed, by weak naturality of the Yoneda embedding, one can view the Yoneda embedding as a natural isomorphism $\id_{ho(\Cat_\infty)}\cong ho(f)$. By Theorem \ref{thm : rigid}, this natural isomorphism can be lifted to a natural equivalence $\id_{\Cat_\infty}\to f$: this is an equivalence $C\simeq C^\yo$, natural in $C$, which, pointwise, is the Yoneda embedding when post-composed with $C^\yo\to \Psh(C)$. 

The claim follows. 
\end{proof}
As a corollary, we find:
\begin{cor}
The functor $\Psh: \Cat_\infty\to \widehat{\Cat_\infty}$ is equivalent to the functor $\mathcal P$ defined via the free cocompletion universal property, cf. \cite[Corollary 5.3.6.10]{HTT}.
\end{cor}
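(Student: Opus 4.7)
The plan is to invoke the universal property of $\mathcal{P}$ as a functor: recall from \cite[Corollary 5.3.6.10]{HTT} that $\mathcal{P}:\Cat_\infty\to\PrL$ is the left adjoint of the forgetful functor $\PrL\to\Cat_\infty$, and its unit is, pointwise, the Yoneda embedding. Concretely, for any functor $F:\Cat_\infty\to\PrL$, precomposition with the Yoneda unit induces an equivalence between the space of natural transformations $\mathcal{P}\to F$ in $\PrL$ and the space of natural transformations $\id_{\Cat_\infty}\to U\circ F$ in $\widehat{\Cat_\infty}$, where $U:\PrL\to\widehat{\Cat_\infty}$ is the forgetful functor.

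First I would refine $\Psh$ to a functor $\Cat_\infty\to\PrL$: pointwise, $\Psh(C)$ is presentable, and by construction (see the paragraph following Definition \ref{defn : pshfunc}) the morphism $\Psh(f)=f_!$ is a left adjoint, so the cocartesian edges of the straightened fibration are morphisms in $\PrL$. Next, I would apply the universal property above to the natural transformation $\yo:\id_{\Cat_\infty}\to U\circ \Psh$ provided by Theorem \ref{mainthm}, obtaining a natural transformation $\varphi:\mathcal{P}\to\Psh$ of functors $\Cat_\infty\to\PrL$.

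To conclude, I would verify that $\varphi$ is a pointwise equivalence. By construction, $\varphi_C:\mathcal{P}(C)\to\Psh(C)$ is the unique colimit-preserving functor whose restriction along $\yo_C:C\to\mathcal{P}(C)$ is the Yoneda embedding $C\to\Psh(C)$. Since $\mathcal{P}(C)$ and $\Psh(C)$ coincide, the identity functor of $\Psh(C)$ also has this property, so by the pointwise universal property $\varphi_C\simeq\id$. Hence $\varphi$ is a natural equivalence, which gives the desired identification $\mathcal{P}\simeq\Psh$ as functors into $\widehat{\Cat_\infty}$ (in fact, into $\PrL$). The only nontrivial check along the way is the refinement of $\Psh$ to a $\PrL$-valued functor, but this is a standard consequence of the adjointability of the restriction functors combined with the un/straightening equivalence.
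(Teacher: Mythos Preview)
Your argument is correct and follows essentially the same route as the paper: both use the natural Yoneda embedding from Theorem~\ref{mainthm} together with the universal property of $\mathcal P$ as a (partial) left adjoint to identify it with $\Psh$. The paper compresses your steps into the single phrase ``this completely characterizes (partial) left adjoints,'' whereas you unfold this by explicitly refining $\Psh$ to land in $\PrL$, producing the comparison map $\varphi:\mathcal P\to\Psh$ via the adjunction, and checking it is a pointwise equivalence; the content is the same.
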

\begin{proof}
The natural transformation $\yo: C\to \Psh(C)$ exhibits $\Psh(C)$ pointwise as the free cocompletion of $C$, and this completely characterizes (partial) left adjoints, in particular $\mathcal P$. 
\end{proof}
 
\subsection{On uniqueness}
We conclude by observing that there is only one way of making the Yoneda embedding natural. In more detail, we prove: 
\begin{cor}
The fiber of $$\map_{\Fun(\Cat_\infty,\widehat{\Cat_\infty})}(\id_{\Cat}, \Psh)\to \map_{\Fun(ho(\Cat_\infty),ho(\widehat{\Cat_\infty}))}(\id_{ho(\Cat_\infty)}, \Psh) $$ over the Yoneda embedding is contractible. 
\end{cor}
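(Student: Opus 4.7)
My plan is to reduce to the subfunctor $F : \Cat_\infty \to \Cat_\infty$, $C \mapsto C^\yo$, introduced in the proof of Theorem \ref{mainthm}, and then compute $\Map_{\Fun(\Cat_\infty, \Cat_\infty)}(\id, F)$ directly. Since each inclusion $C^\yo \hookrightarrow \Psh(C)$ is a full subcategory inclusion, hence a monomorphism in $\Cat_\infty$, and monomorphisms in functor categories are pointwise, the natural transformation $\iota : F \hookrightarrow \Psh$ is a monomorphism in $\Fun(\Cat_\infty, \widehat{\Cat_\infty})$. Applying the limit-preserving functor $\Map(\id, -)$, postcomposition with $\iota$ yields a monomorphism of spaces $\iota_* : \Map(\id, F) \hookrightarrow \Map(\id, \Psh)$, i.e., an inclusion of path components.

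Any $\alpha : \id \to \Psh$ whose image in the homotopy category is $\yo$ is pointwise equivalent to the Yoneda embedding, so its pointwise essential image lies in $C^\yo$; by the construction of $F$ as a subfunctor (Corollary \ref{cor : subfunctor}), $\alpha$ factors through $F$, uniquely so since $\iota$ is a monomorphism. Thus the fiber over $\yo$ is identified, via $\iota_*$, with the fiber of $\Map(\id, F) \to \Map_{\Fun(ho(\Cat_\infty), ho(\Cat_\infty))}(\id, ho(F))$ over the unique lift $\widetilde{\yo}$ of $\yo$ through $ho(\iota)$. It therefore suffices to show that $\Map(\id, F)$ is contractible.

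For this, I would follow the strategy of Theorem \ref{thm : rigid}: the identity of $\Cat_\infty$ is left Kan extended from $\iota_\Delta : \Delta \hookrightarrow \Cat_\infty$, so $\Map_{\Fun(\Cat_\infty, \Cat_\infty)}(\id, F) \simeq \Map_{\Fun(\Delta, \Cat_\infty)}(\iota_\Delta, F \circ \iota_\Delta)$. Since $[n]^\yo \simeq [n]$, the functor $F \circ \iota_\Delta$ factors through the essential image of $\Delta$, and the rigidity-on-posets argument in the proof of Theorem \ref{thm : rigid} produces a natural equivalence $F \circ \iota_\Delta \simeq \iota_\Delta$. Combined with the full faithfulness of $\iota_\Delta$, this reduces the mapping space to $\Map_{\Fun(\Delta, \Delta)}(\id_\Delta, \id_\Delta)$, which is discrete (as $\Delta$ is a $1$-category) and, by naturality at the maps $[0] \to [n]$, contains only the identity. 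Hence $\Map(\id, F)$ is contractible; since the fiber over $\widetilde{\yo}$ is non-empty by Theorem \ref{mainthm}, it is contractible too. The main technical subtlety is the reduction via $F$---ensuring that the factorization of $\alpha$ through $F$ assembles into a natural one and that the fiber upstairs is fully contained in the image of $\iota_*$; once this is in place, the computation of $\Map(\id, F)$ is essentially a uniqueness refinement of the argument for Theorem \ref{thm : rigid}.
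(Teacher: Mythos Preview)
Your proof is correct and takes a genuinely different route from the paper's. The paper argues by forming a pullback square relating $\map(\id,\Psh)$ to $\Aut_{\Fun(\Cat_\infty,\Cat_\infty)}(\id_{\Cat_\infty})$ via postcomposition with a chosen natural Yoneda embedding, and then invokes To\"en's theorem (or Barwick--Schommer-Pries) that this automorphism space is contractible, together with the discreteness of the homotopy-category version. Your approach instead reduces to $\Map(\id,F)$ for $F=(-)^{\yo}$ and computes this space directly via the left Kan extension of $\id_{\Cat_\infty}$ from $\Delta$ and the $1$-categoricity of $\Delta$---the same engine that drives Theorem \ref{thm : rigid}. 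This is more self-contained: you avoid the external black box and effectively reprove the relevant consequence of To\"en's result (contractibility of $\End_{\Fun(\Cat_\infty,\Cat_\infty)}(\id)$) using only tools already present in the paper. The paper's proof is shorter given the citation; yours is more elementary and better integrated with the surrounding arguments. Two minor remarks: the factorization of $\alpha$ through $F$ is really Proposition \ref{prop:pwmono} rather than Corollary \ref{cor : subfunctor}; and your intermediate identification of the fiber with a fiber over $\widetilde{\yo}$ is not strictly needed---since both the desired fiber and $\mathrm{image}(\iota_*)$ are unions of components of $\Map(\id,\Psh)$ (the former because the target is discrete), containment plus contractibility of $\Map(\id,F)$ plus non-emptiness already suffice.
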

\begin{proof}
Consider a natural Yoneda embedding $\yo : \id_{\Cat_\infty}\to\Psh$. 

It induces a commutative square: 
\[\begin{tikzcd}
	{\Aut_{\Fun(\Cat_\infty,\Cat_\infty)}(\id_{\Cat_\infty})} & {\map_{\Fun(\Cat_\infty,\widehat{\Cat_\infty})}(\id_{\Cat_\infty},\Psh)} \\
	{\Aut_{\Fun(ho(\Cat_\infty),ho(\Cat_\infty))}(\id_{ho(\Cat_\infty)})} & {\map_{\Fun(ho(\Cat_\infty),ho(\widehat{\Cat_\infty}))}(\id_{ho(\Cat_\infty)},\Psh)}
	\arrow[from=1-1, to=1-2]
	\arrow[from=2-1, to=2-2]
	\arrow[from=1-1, to=2-1]
	\arrow[from=1-2, to=2-2]
\end{tikzcd}\]

The transformation $\yo$ is pointwise fully faithful, hence it is a monomorphism of functors, and hence both horizontal arrows are inclusions of components. We further claim that it is a pullback square. First, if $y: \id_{\Cat_\infty}\to \Psh$ is another natural transformation which is pointwise the Yoneda embedding, then it must pointwise factor through $\yo$, hence factor through $\yo$ (this is folklore - see Proposition \ref{prop:pwmono} for a proof). Second, a morphism between functors is an equivalence if and only if it is pointwise an equivalence, and this can be detected in the homotopy category. 

Together with the fact that the horizontal maps are inclusions of components, these two facts imply that the square is a pullback. 

It follows that the fiber in question is the same as the fiber of $$\Aut_{\Fun(\Cat_\infty,\Cat_\infty)}(\id_{\Cat_\infty}) \to \Aut_{\Fun(ho(\Cat_\infty),ho(\Cat_\infty))}(\id_{ho(\Cat_\infty)}).$$

The space $\Aut_{\Fun(\Cat_\infty,\Cat_\infty)}(\id_{\Cat})$ is contractible, by \cite[Théorème 6.3]{toen} (see also \cite[Theorem 1.1]{BSP}), and thus this fiber is the loop space of $\Aut_{\Fun(ho(\Cat_\infty),ho(\Cat_\infty)}(\id_{ho(\Cat_\infty)})$ at the identity. 

The $\infty$-category $\Fun(ho(\Cat_\infty), ho(\Cat_\infty))$ is a $1$-category, hence this automorphism space is a set, and thus its loop space at the identity is contractible. This concludes the proof.
\end{proof}
\newpage 
\appendix
\section{On monomorphisms}
In this appendix, we do a few recollections of some folklore results about monomorphisms in $\infty$-categories. The end goal is to prove the following statement :
\begin{prop}
Given a functor $F : I\to C$, and for each object $i\in I$, a monomorphism $g_i \to F(i)$ such that the maps $g_i\to F(i)\xrightarrow{F(f)} F(j)$ factor through $g_j$, for each $f: i\to j$ in $I$, then there is a unique functor $g: I\to C$ with a natural transformation $g\to F$ identifying, for each $i\in I$, $g(i)\to F(i)$ with $g_i\to F(i)$.
\end{prop}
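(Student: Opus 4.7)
The plan is to reformulate the statement as the assertion that a certain functor $p : E \to I$ becomes an equivalence when restricted to a suitable full subcategory, from which both existence and uniqueness of the desired natural transformation will follow formally.

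For the setup, let $\Fun(\Delta^1, C)^{\mathrm{mono}} \subset \Fun(\Delta^1, C)$ be the full subcategory spanned by the monomorphisms, equipped with its target-evaluation functor to $C$, and let $p : E \to I$ denote the pullback of this evaluation along $F$. Sections of $p$ correspond exactly to natural transformations $g \to F$ of functors $I \to C$ that are pointwise monomorphisms, and the given pointwise data $(g_i \to F(i))_{i \in I}$ assembles into a section $s_0 : I^\simeq \to E^\simeq$ of the restriction of $p$ to underlying groupoids; this $s_0$ is automatically a split monomorphism of spaces.

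The key technical observation is that $p$ is faithful: for all $a, b \in E$, the map $\Map_E(a, b) \to \Map_I(pa, pb)$ is $(-1)$-truncated. Indeed, its fiber over an arrow $f : pa \to pb$ is the space of factorizations of the composite $g_a \to F(pa) \xrightarrow{F(f)} F(pb)$ through the monomorphism $g_b \to F(pb)$, which is either empty or contractible by the defining property of a monomorphism. Let $E' \subset E$ be the full subcategory spanned by the image of $s_0$. Then $p|_{E'} : E' \to I$ is essentially surjective by construction, and for each $i, j \in I$ the map $\Map_{E'}(s_0(i), s_0(j)) \to \Map_I(i, j)$ is $(-1)$-truncated by the above and surjective on $\pi_0$ by the factorization hypothesis of the proposition; hence it is an equivalence. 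So $p|_{E'}$ is itself an equivalence of $\infty$-categories.

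To conclude, any section $s : I \to E$ of $p$ extending $s_0$ on objects necessarily factors through $E'$, and so is a section of the equivalence $p|_{E'}$. Sections of an equivalence are contractibly unique, and the unique section restricts to $s_0$ on cores by construction of $E'$, so the space of sections extending $s_0$ is contractible. This unique section encodes the desired functor $g : I \to C$ together with its natural transformation to $F$ realizing the pointwise data. The main technical hurdle is the faithfulness of $p$: once $(-1)$-truncatedness has reduced all higher coherence data to a $\pi_0$-level condition, the factorization hypothesis of the statement is precisely what is needed to promote pointwise data to natural data.
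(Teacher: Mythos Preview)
Your proof is correct and follows essentially the same route as the paper's. The paper packages the argument as a general lifting lemma (for an arbitrary essentially surjective $A\to B$ against an arbitrary faithful $K_0\to K_1$) and then specializes to $C_{mono}\to C$, whereas you work directly with the pullback $E = I\times_C \Fun(\Delta^1,C)^{\mathrm{mono}}$; but the core of both arguments is identical --- restrict the projection $p:E\to I$ to the full subcategory $E'$ spanned by the chosen subobjects, observe that faithfulness of $p$ together with the factorization hypothesis makes $p|_{E'}$ fully faithful and essentially surjective, and invert.

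One small point worth tightening: you assert that the pointwise data ``assembles into a section $s_0 : I^\simeq \to E^\simeq$'', but the hypothesis only hands you a map from a \emph{set} of objects $I_0 \to E$ (this is exactly how the paper sets things up in its corollary on lifts). Promoting this to a map out of $I^\simeq$ would already require coherence under automorphisms in $I$, which is not part of the given data --- indeed, producing that coherence is a special case of what you are trying to prove. Fortunately your argument never actually uses the stronger claim: the definition of $E'$ as a full subcategory on a collection of objects, and the verification that $p|_{E'}$ is an equivalence, go through verbatim with $s_0 : I_0 \to E$ in place of $s_0 : I^\simeq \to E^\simeq$. The compatibility of the resulting section with the original pointwise choices then follows from contractibility of the fibers of $p|_{E'}$.
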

This will be proved as Corollary \ref{cor:monolift}.

 First, for convenience, we recall the following definition: 
\begin{defn}
Let $C$ be an $\infty$-category and $f: x\to y$ a morphism. It is said to be a monomorphism if for any $z\in C$, the map of spaces $\map(z,x)\to \map(z,y)$ is $(-1)$-truncated. 
\end{defn}
\begin{rmk}
Recall that a map of spaces $X\to Y$ is $(-1)$-truncated if and only if all its fibers are empty or contractible, equivalently if one of the following holds: it is fully faithful; the diagonal map $X\to X\times_Y X$ is an equivalence; it is an inclusion of components. 
\end{rmk}
A key example is the following : 
\begin{ex}\label{ex : mainex}
Suppose $f: C\to D$ is a functor between $\infty$-categories. If $f$ is fully faithful, then it is a monomorphism in $\Cat_\infty$.
\end{ex}
\begin{proof}
For any $\infty$-category $E$, $\map(E,-)$ preserves limits. It therefore suffices to show that $C\to C\times_D C$ is an equivalence. 

It is fully faithful because mapping spaces in pullbacks of $\infty$-categories are pullbacks of mapping spaces, and it is essentially surjective because if we have an equivalence $\gamma: f(x)\simeq f(y)$, then by fully faithfulness of $f$, we can lift it to some equivalence $\overline\gamma: x\simeq y$. 
\end{proof}
\begin{warn}
Being fully faithful is stronger than being a monomorphism !
\end{warn}
The key lemma to prove the main result of this appendix is the following :
\begin{lm}
Let $i: A\to B$ be an essentially surjective functor, and $u: K_0\to K_1$ a faithful functor, i.e. a functor which induces inclusions of components on mapping spaces. 

The space of diagonal fillers for any given commutative square as follows is empty or contractible: 
\[\begin{tikzcd}
	A & {K_0} \\
	B & {K_1}
	\arrow[ from=1-1, to=2-1]
	\arrow[from=1-2, to=2-2]
	\arrow["f"', from=2-1, to=2-2]
	\arrow["g", from=1-1, to=1-2]
	\arrow[dashed, from=2-1, to=1-2]
\end{tikzcd}\]
It is non-empty if and only if for every $a, a'\in A$, and every arrow $\alpha: i(a) \to i(a')$, the arrow $f(\alpha)$ is in the image of the map $\map(g(a),g(a'))\to \map(f(i(a)), f(i(a')))$ induced by $u: K_0\to K_1$. 
\end{lm}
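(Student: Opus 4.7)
The plan is to prove the lemma in two parts: first, that the space of fillers is $(-1)$-truncated, and second, that it is non-empty under the stated image condition. The key input, used throughout, is the standard equivalence that $u$ is faithful iff its diagonal $\Delta: K_0 \to K_0 \times_{K_1} K_0$ is fully faithful, which reflects the analogous fact that a map of spaces is $(-1)$-truncated iff its diagonal is an equivalence.

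For the first part, given two fillers $h_0, h_1: B \to K_0$ with matching boundary data, I would pair them into $(h_0, h_1): B \to K_0 \times K_0$, and use the compatible homotopies $u \circ h_j \simeq f$ to factor through $K_0 \times_{K_1} K_0$. Every object of $B$ then lies in the essential image of $\Delta$, because essential surjectivity of $i$ reduces this to checking it for $b \simeq i(a)$, where $(h_0(i(a)), h_1(i(a)))$ is canonically equivalent to $(g(a), g(a))$. Since $\Delta$ is fully faithful, such a factorization is unique up to a contractible space of choices, and translating back gives a canonical equivalence $h_0 \simeq h_1$ compatible with the rest of the filler data---this is exactly the contractibility of the space of paths in the filler space.

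For the second part, necessity is immediate, as a filler $h$ supplies $h(\alpha): g(a) \to g(a')$ lifting $f(\alpha)$ under $u$. For sufficiency I would form the pullback $P := B \times_{K_1} K_0$ with projection $p: P \to B$; then $p$ is faithful as a base change of $u$, and essentially surjective since every $b \in B$ is equivalent to some $i(a)$ with $f(b) \simeq u(g(a))$. The commutative square gives a section $s: A \to P$ of $p$ over $i$. Letting $B' \subset P$ be the full subcategory on the essential image of $s$, the composite $B' \hookrightarrow P \to B$ inherits essential surjectivity and faithfulness, and the image hypothesis upgrades it to fully faithful: for $a, a' \in A$, the mapping space $\map_P(s(a), s(a'))$ is a pullback whose fiber over an arrow $\alpha: i(a) \to i(a')$ is $(-1)$-truncated by faithfulness of $u$ and non-empty by the hypothesis, hence contractible. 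Thus $B' \to B$ is an equivalence; inverting it and composing with $B' \hookrightarrow P$ yields a section $B \to P$, whose composition with the projection to $K_0$ is the desired filler $h$. Compatibility $h \circ i \simeq g$ then follows because both $s$ and the restriction of our new section to $A$ factor through the equivalence $B' \to B$ over $i$, hence are canonically equivalent.

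The main obstacle is maintaining homotopy-coherent compatibility between the fragments being glued together, particularly when invoking ``unique'' factorizations through $\Delta$ or the ``canonical'' comparison of sections of $p$. The recurring trick is that both the faithfulness of $u$ and the inherited faithfulness of $p$ force the relevant spaces of liftings and factorizations to be $(-1)$-truncated, so that once object- and arrow-level data have been pinned down, all higher coherences become automatic.
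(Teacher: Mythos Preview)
Your argument is correct and follows essentially the same route as the paper's proof: both reduce the $(-1)$-truncatedness of the filler space to the full faithfulness of the diagonal $\Delta: K_0 \to K_0 \times_{K_1} K_0$ together with the folklore fact that lifts through a fully faithful functor form a contractible space once the target lands in the essential image (the paper cites Martini for this, while you invoke it as standard), and both establish sufficiency by forming $P = B \times_{K_1} K_0$, restricting to the essential image of the induced map $A \to P$, and using the mapping-space hypothesis to upgrade the faithful essentially surjective projection $B' \to B$ to an equivalence. The only cosmetic difference is that for the first part the paper argues with the diagonal of the map $\map_{A/}(B,K_0)\to\map_{A/}(B,K_1)$ directly, whereas you unpack this by pairing two given fillers into a map $B \to K_0 \times_{K_1} K_0$; these are the same argument.
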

\begin{rmk}
More precisely, what we mean by ``the space of diagonal fillers'' is the mapping space from $B$ to $K_0$ in $(\Cat_\infty)_{A//K_1}$. The description we will use of this mapping space is as a space of factorizations in $(\Cat_\infty)_{A/}$, i.e. the fiber of $\map_{A/}(B,K_0)\to \map_{A/}(B,K_1)$ over $f$. 
\end{rmk}
\begin{rmk}
If $K_0\to K_1$ is further a co/Cartesian fibration, then our assumption on mapping spaces amounts to the assumption that its fibers are posets. 
\end{rmk}
\begin{proof}
We first observe that the diagonal map $K_0\to K_0\times_{K_1}K_0$ is $(-1)$-truncated, i.e. a monomorphism. Indeed, mapping spaces in pullbacks of $\infty$-categories are pullbacks of mapping spaces, and so the assumption on $K_0\to K_1$ implies that this diagonal is fully faithful. Example \ref{ex : mainex} shows that fully faithful implies $(-1)$-truncated. 

Now, this already implies that the space of diagonal fillers is $(-1)$-truncated : indeed, the space of diagonal fillers is the fiber of $\map_{A/}(B,K_0)\to \map_{A/}(B,K_1)$ over $f$, so it suffices to show that this map is $(-1)$-truncated. 
But the diagonal of this map is $\map_{A/}(B,-)$ applied to the diagonal $K_0\to K_0\times_{K_1}K_0$, which is fully faithful. So we are reduced to showing that for a fully faithful functor, the space of diagonal fillers is contractible. This is again folklore - for a precise proof, see \cite[Definition 3.8.1, Remark 3.8.2, Proposition 3.8.7, and Corollary 3.9.5]{Martini}\footnote{Martini proves this statement in much greater generality than what we need, but we were not able to locate an earlier reference in the literature. }.

Now, the condition we stated for the existence of a filler is clearly necessary, so let us now focus on sufficiency. 

For sufficiency, we observe that we have a factorization of $i$ as $A\xrightarrow{t} K_0\times_{K_1}B\to B$. Because $A\to B$ is essentially surjective, the restriction of the projection $K_0\times_{K_1}B\to B$ to the essential image of $t$ is also essentially surjective. We now observe that the condition we gave is exactly the condition imposing that this restriction to the image of $t$ be fully faithful. Indeed, for $a,a'\in A$, we have a pullback square : 

\[\begin{tikzcd}
	{\map(t(a),t(a'))} & {\map(i(a),i(a'))} \\
	{\map(g(a),g(a'))} & {\map(f(i(a)),f(i(a')))}
	\arrow[from=1-1, to=2-1]
	\arrow[from=1-2, to=2-2]
	\arrow[from=2-1, to=2-2]
	\arrow[from=1-1, to=1-2]
\end{tikzcd}\]

where the top horizontal map is given by applying the projection to $B$. The bottom map, as it is given by applying $K_0\to K_1$, is an inclusion of components. Therefore, the top map is one as well. Our assumption is precisely that the top map is surjective on $\pi_0$ - it follows that if it is satisfied, the top map is an equivalence, which is exactly saying that the restriction of the projection $K_0\times_{K_1}B\to B$ to the essential image of $t$ is fully faithful. As it is essentially surjective, it is an equivalence, and thus we get an inverse $B\to K_0\times_{K_1}B$, which is easily seen to give a diagonal filler. 
\end{proof}

\begin{cor}\label{cor:monolift}
Let $C$ be an $\infty$-category, and let $C_{mono}\subset C^{\Delta^1}$ be the full subcategory of the arrow category spanned by monomorphisms. Let $I$ be an arbitrary $\infty$-category, with an essential surjection $p: I_0\to I$, e.g. from a set. Then the space of diagonal fillers in any commutative diagram as below is empty or contractible: 
\[\begin{tikzcd}
	{I_0} & {C_{mono}} \\
	I & C
	\arrow[ from=1-1, to=2-1]
	\arrow[from=1-2, to=2-2]
	\arrow["f"', from=2-1, to=2-2]
	\arrow["m", from=1-1, to=1-2]
	\arrow[dashed, from=2-1, to=1-2]
\end{tikzcd}\] 
where the map $C_{mono}\to C$ is evaluation at $1$, that picks out the target of an arrow. 

It is non-empty if and only if for every arrow $p(i)\to p(j)$ in $I$, the corresponding map $f(p(i))\to f(p(j))$ is compatible with the monomorphisms $m(i) = (h_i\to f(p(i)))$, $m(j) = (h_j\to f(p(j)))$, i.e. the composite $h_i\to f(p(i))\to f(p(j))$ factors through $h_j$. 
\end{cor}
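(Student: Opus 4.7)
My plan is to reduce the corollary to the preceding Lemma via the substitution $A = I_0$, $B = I$, $i = p$, $K_0 = C_{mono}$, $K_1 = C$, and $u: C_{mono} \to C$ the evaluation-at-$1$ functor picking out the target of an arrow. Then the square in the corollary is precisely of the shape considered in the Lemma, and both the ``empty or contractible'' conclusion and the characterization of non-emptiness descend from the Lemma's conclusions. Essential surjectivity of $p$ is assumed, so the content reduces to verifying that $u$ is faithful and translating the mapping-space condition.

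For faithfulness of $u$, the plan is to use the standard pullback description of mapping spaces in $C^{\Delta^1}$: for $m: h\to c$ and $m': h'\to c'$ in $C_{mono}$, one has $\map_{C^{\Delta^1}}(m, m') \simeq \map_C(h, h') \times_{\map_C(h, c')} \map_C(c, c')$, where the second projection is exactly $u$ on mapping spaces. Taking fibers, the fiber of this projection over a morphism $\alpha: c\to c'$ is the fiber of $\map_C(h, h') \to \map_C(h, c')$ over $\alpha \circ m$, which is $(-1)$-truncated because $m'$ is a monomorphism. Hence every fiber of $u_*$ on mapping spaces is $(-1)$-truncated, so $u_*$ itself is $(-1)$-truncated, i.e.\ $u$ is faithful.

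For the translation of the Lemma's non-emptiness condition: $f(\alpha): f(p(i))\to f(p(j))$ lies in the image of $\map_{C_{mono}}(m(i), m(j)) \to \map_C(f(p(i)), f(p(j)))$ precisely when it fits as the bottom edge of a commutative square with vertical arrows $m(i)$ and $m(j)$, which is equivalent to saying that the composite $h_i \to f(p(i)) \to f(p(j))$ factors through $h_j$. This matches the condition in the corollary verbatim, and applying the Lemma then yields the desired statement. The only point in the argument requiring a little care is confirming the pullback formula for $\map_{C^{\Delta^1}}$, which is standard; one can either extract it from $C^{\Delta^1}\to C\times C$ being a bifibration whose fibers are the mapping spaces, or derive it directly from the universal property of the arrow category.
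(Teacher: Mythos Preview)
Your proposal is correct and follows essentially the same approach as the paper: both reduce to the preceding Lemma by verifying that $C_{mono}\to C$ is faithful via the pullback square $\map_{C^{\Delta^1}}(m,m')\simeq \map_C(h,h')\times_{\map_C(h,c')}\map_C(c,c')$, using that $m'$ is a monomorphism to see the projection to $\map_C(c,c')$ is an inclusion of components. The paper leaves the translation of the non-emptiness criterion implicit, whereas you spell it out, but the argument is otherwise the same.
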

\begin{proof}
By the previous lemma, it suffices to prove that $C_{mono}\to C$ induces inclusions of components on mapping spaces. 

Let $f: x_0\to x_1, g: y_0\to y_1$ be monomorphisms. We have a pullback square: 
\[\begin{tikzcd}
	{\map(f,g)} & {\map(x_1,y_1)} \\
	{\map(x_0,y_0)} & {\map(x_0,y_1)}
	\arrow[from=1-1, to=2-1]
	\arrow[from=1-2, to=2-2]
	\arrow[from=2-1, to=2-2]
	\arrow[from=1-1, to=1-2]
	\arrow["\lrcorner"{anchor=center, pos=0.125}, draw=none, from=1-1, to=2-2]
\end{tikzcd}\]

Because $y_0\to y_1$ is a monomorphism, the bottom map is an inclusion of components, therefore so is the top map, which proves the claim. 
\end{proof}
\begin{cor}\label{cor : subfunctor}
Let $I$ be an $\infty$-category and $f: I \to \Cat_\infty$ be a functor, and let $p: I_0\to I$ be an essentially surjective map from a set. 

Suppose given, for each $i\in I_0$, a fully faithful embedding $g(i)\to f(p(i))$. 

If for every $i,i'\in I_0$ and every map $h: p(i)\to p(i')$ in $I$, the composite $g(i)\to f(p(i)) \to f(p(i'))$ factors through $g(i')$, then there exists an essentially unique lift $\tilde f$ of $f$ along $\Fun(\Delta^1,\Cat_\infty)\xrightarrow{\mathrm{target}}\Cat_\infty$ such that for all $i\in I_0$, $\tilde f(p(i))=  (g(i)\to f(p(i)))$.
\end{cor}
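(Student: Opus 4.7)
The plan is to deduce this as a direct instance of the preceding Corollary \ref{cor:monolift}, specialized to $C = \Cat_\infty$. By Example \ref{ex : mainex}, every fully faithful functor in $\Cat_\infty$ is a monomorphism there, so each of the given embeddings $g(i) \to f(p(i))$ is canonically an object of $(\Cat_\infty)_{mono} \subset \Fun(\Delta^1, \Cat_\infty)$. Because $I_0$ is a set, no coherence data is needed to assemble the collection $\{g(i) \to f(p(i))\}_{i \in I_0}$ into a map $m : I_0 \to (\Cat_\infty)_{mono}$, and by construction $\mathrm{target} \circ m = f \circ p$. This yields a commutative square of exactly the shape considered in Corollary \ref{cor:monolift}, with left vertical $p : I_0 \to I$, right vertical $\mathrm{target} : (\Cat_\infty)_{mono} \to \Cat_\infty$, top horizontal $m$, and bottom horizontal $f$.

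The next step is to verify that the factorization hypothesis of Corollary \ref{cor:monolift} holds, but this is exactly the assumption of the present corollary: for every arrow $p(i) \to p(i')$ in $I$, the composite $g(i) \to f(p(i)) \to f(p(i'))$ factors through $g(i') \to f(p(i'))$. Applying Corollary \ref{cor:monolift} then produces an essentially unique diagonal filler $I \to (\Cat_\infty)_{mono}$, and post-composing with the (fully faithful) inclusion $(\Cat_\infty)_{mono} \hookrightarrow \Fun(\Delta^1, \Cat_\infty)$ yields the required lift $\tilde f$ of $f$ along $\mathrm{target}$. The identification $\tilde f(p(i)) = (g(i) \to f(p(i)))$ for $i \in I_0$ is built into the commutativity of the upper triangle of the filler diagram, and essential uniqueness of $\tilde f$ follows from the fact that Corollary \ref{cor:monolift} asserts the space of such fillers to be empty or contractible.

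The only genuinely substantive step is Corollary \ref{cor:monolift} itself; once that is in hand, there is no real obstacle, since this statement is simply its unfolding in the specific setting $C = \Cat_\infty$, with the bridge between fully faithful functors and monomorphisms in $\Cat_\infty$ supplied by Example \ref{ex : mainex}.
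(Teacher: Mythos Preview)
Your proposal is correct and is exactly the intended argument: the paper states this corollary immediately after Corollary~\ref{cor:monolift} with no separate proof, precisely because it is the specialization to $C=\Cat_\infty$ together with Example~\ref{ex : mainex}. One small point you leave implicit but which is worth noting for the uniqueness claim: any lift $\tilde f$ along $\Fun(\Delta^1,\Cat_\infty)\xrightarrow{\mathrm{target}}\Cat_\infty$ with the prescribed values on $I_0$ automatically lands in the full subcategory $(\Cat_\infty)_{mono}$, since $p$ is essentially surjective; this is what ensures that uniqueness among lifts to $\Fun(\Delta^1,\Cat_\infty)$ really does reduce to the contractibility statement of Corollary~\ref{cor:monolift}.
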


We conclude with another folklore result about monomorphisms: 
\begin{prop}\label{prop:pwmono}
Let $f,g,h: I\to C$ be functors between $\infty$-categories, and $\eta: f\to g$ a transformation which is pointwise a monomorphism.  

In this case, $\eta$ is a monomorphism, and a transformation $\alpha: h\to g$ factors through $\eta$ if and only if for all $i\in I$, $\alpha_i : h(i)\to g(i)$ factors through $\eta_i : f(i)\to g(i)$. 
\end{prop}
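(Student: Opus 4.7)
The plan is to exploit two basic principles in the functor $\infty$-category $\Fun(I,C)$: both limits and equivalences are detected pointwise.

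For the first claim, that $\eta$ is a monomorphism in $\Fun(I,C)$, I would use the standard characterization: $\eta$ is a monomorphism if and only if the diagonal $f \to f \times_g f$ is an equivalence. Since pullbacks in $\Fun(I,C)$ are computed pointwise, this diagonal evaluated at $i$ is $f(i) \to f(i)\times_{g(i)} f(i)$, which is an equivalence by the pointwise monomorphism hypothesis on $\eta$. Pointwise equivalences being equivalences in $\Fun(I,C)$ then finishes this step.

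For the factorization claim, the ``only if'' direction is immediate, so I would focus on the converse. My plan is to form the pullback $P := h\times_g f$ in $\Fun(I,C)$, with projections $\pi_h : P\to h$ and $\pi_f : P\to f$. Since monomorphisms are stable under pullback, $\pi_h$ is a monomorphism. The hypothesis that each $\alpha_i$ factors through $\eta_i$ translates, via the universal property of the pointwise pullback, into a section of $\pi_{h,i}$ for every $i$. A monomorphism admitting a section is automatically an equivalence (indeed, if $s$ is a section of a monomorphism $p$, then $p\circ s\circ p \simeq p \simeq p\circ \id$ forces $s\circ p \simeq \id$). Hence $\pi_h$ is pointwise, and therefore globally, an equivalence in $\Fun(I,C)$; composing its inverse with $\pi_f$ then yields the desired lift $\tilde\alpha : h\to f$ with $\eta\circ \tilde\alpha \simeq \alpha$.

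The only non-formal input is the observation that a monomorphism admitting a section is an equivalence, which is immediate from the definition. Everything else is a direct application of the pointwise detection principles for limits and equivalences in functor categories, so I would not expect any real obstacle.
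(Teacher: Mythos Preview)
Your proposal is correct and follows essentially the same route as the paper: reduce both claims to pointwise statements via the pullbacks $f\times_g f$ and $h\times_g f$ in $\Fun(I,C)$, using that a monomorphism with a section is an equivalence (the paper isolates this last step as Lemma~\ref{lm:factmono}). The one point you glide over is the \emph{existence} of these pullbacks when $C$ carries no limit hypotheses; the paper handles this by first embedding $C$ fully faithfully into $\Psh(C)$, after which $\Fun(I,C)$ sits inside an $\infty$-category with all finite limits computed pointwise.
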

We will use the following lemma: 
\begin{lm}\label{lm:factmono}
Let $D$ be an $\infty$-category with finite limits, and $i: x\to y$ a monomorphism. Let $z\in D$ and $p: z\to y$ be a morphism. 

In this situation, $p$ factors through $i$ if and only if the projection map $z\times_y x\to z$ is an equivalence. 
\end{lm}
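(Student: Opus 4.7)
The plan is to deduce both directions from two standard properties of monomorphisms: stability under pullback, and the fact that a monomorphism admitting a section is automatically an equivalence. The second property is itself immediate from the definition recalled at the beginning of this appendix: if $\pi : A \to B$ is a monomorphism with a section $s$, then for every test object $w$ the map $\map(w,A) \to \map(w,B)$ is $(-1)$-truncated and $s$ makes every fiber non-empty, hence contractible, so $\pi$ is an equivalence by the Yoneda lemma.

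For the ``if'' direction, I would start from a chosen factorization $p \simeq i \circ q$ for some $q : z \to x$. The pair $(\id_z, q)$, together with the homotopy $p \simeq i\circ q$, determines via the universal property of the pullback a map $s : z \to z\times_y x$ that is a section of the projection $\pi : z\times_y x \to z$. Since $\pi$ is the base-change of $i$ along $p$, and monomorphisms are preserved under pullback (mapping spaces in pullbacks of $\infty$-categories are pullbacks of mapping spaces, so the $(-1)$-truncation property is inherited), $\pi$ is a monomorphism with a section and is therefore an equivalence.

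For the ``only if'' direction, suppose $\pi$ is an equivalence, pick an inverse $\pi^{-1}$, and consider the composite $\mathrm{pr}_x \circ \pi^{-1} : z \to x$. The defining commutativity $i\circ \mathrm{pr}_x \simeq p\circ \pi$ of the pullback square, precomposed with $\pi^{-1}$, yields $i\circ (\mathrm{pr}_x \circ \pi^{-1}) \simeq p$, which is the required factorization.

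I do not foresee any serious obstacle. Both ingredients are routine consequences of material already in the appendix: the characterization of monomorphisms via their mapping spaces, and the computation of mapping spaces in pullbacks used in Example \ref{ex : mainex}. The only conceptual subtlety is that, in the $\infty$-categorical setting, ``$p$ factors through $i$'' should be read as non-emptiness of the space of factorizations, i.e.\ of the fiber of $\map(z,x) \to \map(z,y)$ over $p$; but because $i$ is a monomorphism this fiber is $(-1)$-truncated, so the binary ``if and only if'' formulation is unambiguous.
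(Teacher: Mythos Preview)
Your argument is correct and is essentially the paper's own proof: both use that the projection $z\times_y x \to z$ is a monomorphism (pullback of a mono), that a monomorphism with a section is an equivalence, and that sections of this projection are exactly factorizations of $p$ through $i$ via the universal property of the pullback. The only quibble is that your labels are swapped---what you call the ``if'' direction (factorization $\Rightarrow$ equivalence) is the ``only if'' direction of the statement as written, and vice versa.
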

\begin{proof}
By definition, we have a commutative diagram : 
\[\begin{tikzcd}
	{z\times_y x} & x \\
	z & y
	\arrow["i", from=1-2, to=2-2]
	\arrow["p"', from=2-1, to=2-2]
	\arrow[from=1-1, to=2-1]
	\arrow[from=1-1, to=1-2]
\end{tikzcd}\]

The left vertical leg is a monomorphism, as it is pulled back from a monomorphism. It follows that it is an equivalence, if and only if it has a section. 

But a section of this map is exactly a factorization of $p$ through $i$, so the left vertical leg is an equivalence if and only if there exists such a factorization. 
\end{proof}
\begin{proof}[Proof of Proposition \ref{prop:pwmono}]
Up to embedding $C$ in its presheaf category, we may assume $C$ has finite limits, and hence also the functor category $\Fun(I,C)$, and the latter are computed pointwise. 

In particular, the diagonal map $f\to f\times_g f$ is an equivalence because it is so pointwise, which proves that $f\to g$ is a monomorphism. 

For the second part of the statement, form the pullback $h\times_g f \to h$. 

Evaluating this map at any $i\in I$, Lemma \ref{lm:factmono} shows that this is an equivalence at every $i\in I$, hence an equivalence. Using Lemma \ref{lm:factmono} again, but now in $\Fun(I,C)$, we deduce that $h\to g$ factors through $\eta$. 
\end{proof}
\bibliographystyle{alpha}
\bibliography{Biblio.bib}

\textsc{Institut for Matematiske Fag, K\o benhavns Universitet, Danmark}

\textit{Email adress : }\texttt{maxime.ramzi@math.ku.dk}
\end{document}